\tikzset{
  shift left/.style ={commutative diagrams/shift left={#1}},
  shift right/.style={commutative diagrams/shift right={#1}}
}
\theoremstyle{plain}
\newtheorem{theorem}{Theorem}
\newtheorem{lemma}[theorem]{Lemma}
\newtheorem{proposition}[theorem]{Proposition}
\newtheorem*{theorem*}{Theorem}
\theoremstyle{definition}
\newtheorem{definition}[theorem]{Definition}
\newtheorem{remark}[theorem]{Remark}
\newtheorem*{thexreftheorem}{Theorem \thexref}
\newtheorem*{thexrefcorollary}{Corollary \thexcoro}
\newcommand{\N}{\mathbb N}
\newcommand{\Z}{\mathbb Z}
\newcommand{\R}{\mathbb R}
\renewcommand{\epsilon}{\varepsilon}
\renewcommand{\phi}{\varphi}
\begin{document}

\title{A criterion for slope 1 homological stability}
\author{Mikala Ørsnes Jansen and Jeremy Miller}\thanks{Jeremy Miller was supported by NSF Grant DMS-2202943. Mikala Ørsnes Jansen was supported by the Danish National Research Foundation through the Copenhagen Centre for Geometry and Topology (DNRF151).}

\begin{abstract}
We show that for nice enough $\N$-graded $\mathbb{E}_2$-algebras, a diagonal vanishing line in $\mathbb{E}_1$-homology of gives rise to slope $1$ homological stability. This is an integral version of a result by Kupers--Miller--Patzt.
\end{abstract}

\maketitle

The $\mathbb{E}_k$-cellular approach to homological stability \cite{KupersMiller,GalatiusKupersRandalWilliams} allows one to prove homological stability results by studying $\mathbb{E}_k$-cell structures on $\mathbb{E}_k$-algebras. In particular, in many situations it is possible to establish a vanishing line for $\mathbb{E}_k$-homology by studying the equivariant homology of certain simplicial complexes \cite{GalatiusKupersRandalWilliams18b, GalatiusKupersRandalWilliams20,KupersMillerPatzt}. Generalizing arugments appearing in \cite{GalatiusKupersRandalWilliams18b}, \cite[Proposition 5.1]{KupersMillerPatzt} gives a criterion for slope $1$ homological stability for $\mathbb{E}_3$-algebras with $\Z[\sfrac{1}{2}]$-coefficients given a vanishing line for $\mathbb{E}_1$-homology. The purpose of this note is to remove the hypotheses that $2$ is a unit in the coefficient ring and generalize the result to $\mathbb{E}_2$-algebras.


Throughout, we will use notation from \cite{GalatiusKupersRandalWilliams}. Our main result is the following.

\begin{theorem}\label{prop}
Let $\Bbbk$ be a commutative ring and $\mathbf{A}\in \operatorname{Alg}_{\mathbb{E}_2}(\operatorname{sMod}_\Bbbk^{\N})$ with $H_{\ast,0}(\overline{\mathbf{A}})\cong \Bbbk[\sigma]$ for $\sigma\in H_{1,0}(\mathbf{A})$. If $H_{n,d}^{\mathbb{E}_1}(\mathbf{A})=0$ for all $d<n$, then $H_{n,d}(\overline{\mathbf{A}}/\sigma)=0$ for all $d<n$.
\end{theorem}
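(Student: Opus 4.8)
The plan is to transport a relative $\mathbb{E}_1$-cell structure on $\mathbf{A}$ through the base-change functor that divides out $\sigma$. First I would isolate the role of the $\mathbb{E}_2$-structure: since the free $\mathbb{E}_1$-algebra on a single generator in bidegree $(1,0)$ is the polynomial algebra, the class $\sigma$ is adjoint to a map $\Bbbk[\sigma]\to\mathbf{A}$ of $\mathbb{E}_1$-algebras, and the $\mathbb{E}_2$-structure supplies the homotopy making $\sigma$ central, so that $\Bbbk[\sigma]$ is a central subalgebra and $\mathbf{A}$ is an $\mathbb{E}_1$-algebra in $\Bbbk[\sigma]$-modules. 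Then $\mathbf{A}/\sigma$, the cofiber of multiplication by $\sigma$, is identified with the derived base change $\Bbbk\otimes^{\mathbb{L}}_{\Bbbk[\sigma]}\mathbf{A}$ (using $\Bbbk=\operatorname{cofib}(\sigma\colon\Bbbk[\sigma]\to\Bbbk[\sigma])$), and the functor $\Bbbk\otimes^{\mathbb{L}}_{\Bbbk[\sigma]}(-)$ is colimit-preserving and strong symmetric monoidal onto $\operatorname{sMod}_\Bbbk^{\N}$. This is the \emph{only} place the $\mathbb{E}_2$-hypothesis enters, and it enters integrally. The reduced quotient $\overline{\mathbf{A}}/\sigma$ differs from $\mathbf{A}/\sigma$ only by the unit in bidegree $(0,0)$, so it suffices to prove the vanishing line for $\mathbf{A}/\sigma$.

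Next I would produce the cell structure. The content of the hypothesis is that all $\mathbb{E}_1$-homology of $\mathbf{A}$ below the diagonal is accounted for by the central class $\sigma$ spanning $H^{\mathbb{E}_1}_{1,0}(\mathbf{A})$ and generating $\Bbbk[\sigma]$; since this class is pulled back from $H^{\mathbb{E}_1}_{*,*}(\Bbbk[\sigma])=\Bbbk\sigma$, the long exact sequence relating absolute and relative $\mathbb{E}_1$-homology shows that the relative $\mathbb{E}_1$-homology $H^{\mathbb{E}_1}_{n,d}(\mathbf{A},\Bbbk[\sigma])$ vanishes for $d<n$. By the Hurewicz and CW-approximation theory of \cite{GalatiusKupersRandalWilliams} this upgrades to a relative $\mathbb{E}_1$-CW structure exhibiting $\mathbf{A}$ as built from $\Bbbk[\sigma]$ by attaching $\mathbb{E}_1$-cells only in bidegrees $(n,d)$ with $d\geq n$, i.e.\ on or above the diagonal.

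Then I would apply base change. Because $\Bbbk\otimes^{\mathbb{L}}_{\Bbbk[\sigma]}(-)$ is colimit-preserving and strong monoidal, it carries the relative $\mathbb{E}_1$-CW structure to an absolute $\mathbb{E}_1$-CW structure on $\mathbf{A}/\sigma$: the subalgebra $\Bbbk[\sigma]$ maps to the unit $\Bbbk\otimes^{\mathbb{L}}_{\Bbbk[\sigma]}\Bbbk[\sigma]=\Bbbk$, while each relative cell in bidegree $(n,d)$, being free over $\Bbbk[\sigma]$ and hence flat, becomes a free $\mathbb{E}_1$-cell in the same bidegree, still with $d\geq n$. Filtering $\mathbf{A}/\sigma$ by skeleta, the associated graded is the free $\mathbb{E}_1$-algebra on generators concentrated in bidegrees $d\geq n$; since a tensor product of on-or-above-diagonal classes is again on or above the diagonal, this free algebra vanishes for $d<n$, and therefore so does every page of the skeletal spectral sequence. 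Hence $H_{n,d}(\mathbf{A}/\sigma)=0$, and likewise $H_{n,d}(\overline{\mathbf{A}}/\sigma)=0$, for $d<n$.

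The main obstacle is the middle step: upgrading the numerical vanishing of relative $\mathbb{E}_1$-homology to an honest relative $\mathbb{E}_1$-CW structure over $\Bbbk[\sigma]$ with all cells above the diagonal, and verifying that $\Bbbk[\sigma]$ genuinely sits inside $\mathbf{A}$ as a sub-CW-object so that base change is clean. This is exactly where the connectivity and skeletal-filtration machinery of \cite{GalatiusKupersRandalWilliams} must be invoked with care. I expect the surrounding verifications — strong monoidality of $\Bbbk\otimes^{\mathbb{L}}_{\Bbbk[\sigma]}(-)$, flatness of the cells over $\Bbbk[\sigma]$ so that derived and underived base change agree, and convergence of the skeletal spectral sequence — to be routine. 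The decisive feature is that the argument uses only centrality of $\sigma$ and monoidality of base change, so it requires neither an $\mathbb{E}_3$-structure nor any Browder or Dyer--Lashof operations and no division by $2$; this is precisely what upgrades \cite[Proposition 5.1]{KupersMillerPatzt} to $\mathbb{E}_2$-algebras over an arbitrary commutative ring $\Bbbk$.
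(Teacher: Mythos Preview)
There is a genuine gap in the base-change step. Your relative $\mathbb{E}_1$-CW approximation $\Bbbk[\sigma]\to\mathbf{C}\xrightarrow{\sim}\mathbf{A}$ is built by pushouts in $\operatorname{Alg}_{\mathbb{E}_1}(\operatorname{sMod}_\Bbbk^{\N})$, i.e.\ by $\mathbb{E}_1$-cell attachments over $\Bbbk$. The functor $\Bbbk\otimes^{\mathbb{L}}_{\Bbbk[\sigma]}(-)$, on the other hand, is strong monoidal only as a functor $\operatorname{sMod}_{\Bbbk[\sigma]}^{\N}\to\operatorname{sMod}_\Bbbk^{\N}$ with respect to $\otimes_{\Bbbk[\sigma]}$; it therefore carries $\mathbb{E}_1$-cell attachments \emph{in $\Bbbk[\sigma]$-modules} to $\mathbb{E}_1$-cell attachments in $\Bbbk$-modules, but it has no reason to interact well with the pushouts you actually took. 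Concretely, the skeleta $\mathbf{C}_i$ are $\mathbb{E}_1$-$\Bbbk$-algebras under $\Bbbk[\sigma]$, but $\sigma$ is \emph{not} central in them: centrality of $\sigma$ in $\mathbf{A}$ came from the $\mathbb{E}_2$-structure, which the intermediate $\mathbb{E}_1$-skeleta do not carry. Already the first nontrivial case shows the failure: attaching a single free $\mathbb{E}_1$-generator $\alpha$ in bidegree $(1,1)$ to $\Bbbk[\sigma]$ yields the free associative algebra $\Bbbk\langle\sigma,\alpha\rangle$, and the cofibre of left multiplication by $\sigma$ has a $\Bbbk$-basis of all words \emph{not beginning with} $\sigma$ (so $\alpha\sigma$, $\alpha\sigma\alpha$, \dots\ survive). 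This is far from the free $\mathbb{E}_1$-algebra on $\alpha$, so your assertion that ``each relative cell \dots\ becomes a free $\mathbb{E}_1$-cell in the same bidegree'' is false.

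This is not a technicality one can patch by invoking more machinery from \cite{GalatiusKupersRandalWilliams}: to run your argument one would need an $\mathbb{E}_1$-CW structure on $\mathbf{A}$ \emph{as an $\mathbb{E}_1$-algebra in $\Bbbk[\sigma]$-modules}, and the hypothesis on $H^{\mathbb{E}_1}_{*,*}(\mathbf{A})$ does not control that. If instead one tries to stay in the $\mathbb{E}_2$-world so that centrality is preserved along skeleta, the starting point becomes $\mathbf{E}_2(S^{1,0}_\Bbbk\sigma)$, and then the Dyer--Lashof class $Q^1\sigma\in H_{2,1}$ obstructs slope~$1$ stability (cf.\ \Cref{RemarkFree}: $H_{2,1}(\overline{\mathbf{E}}_2(S^{1,0}_\Z\sigma)/\sigma)=\Z$). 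The paper's proof is organised precisely around neutralising this class: one first shows from the $\mathbb{E}_1$-vanishing that $\sigma\cdot\colon H_{1,1}(\mathbf{A})\to H_{2,1}(\mathbf{A})$ is surjective, so $Q^1\sigma=\sigma\alpha$ for some $\alpha$, and then works relative to the explicit $\mathbb{E}_2$-algebra $\mathbf{Y}=\mathbf{E}_2(S^{1,0}\sigma\oplus S^{1,1}\alpha)\cup^{\mathbb{E}_2}_{Q^1\sigma-\sigma\alpha}D^{2,2}\rho$, which \emph{does} have slope~$1$ stability. Your sketch bypasses $Q^1\sigma$ entirely, and that is exactly where it breaks.
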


This result will be applied in \cite{OrsnesJansen24} to deduce slope $1$ homological stability (with integer coefficients) for $\operatorname{RBS}$-categories over Euclidean domains. The $\operatorname{RBS}$-categories were introduced in \cite{ClausenOrsnesJansen} as a model for unstable algebraic K-theory. Such a stability result has also been announced in \cite[Remark 1.4]{Randal-Williams24}, but that proof will (to the best of our understanding) rely on very different methods. It will also be used by the second author and R. Sroka to establish slope $1$ stability for certain Iwahori-Hecke algebras.

Note that \Cref{prop} is false if $\mathbf{A}$ is only assumed to be an $\mathbb{E}_1$-algebra. The primary difficulty in proving \Cref{prop} is that the conclusions and hypotheses of the theorem do not apply to free algebras (see \Cref{RemarkFree}).


We begin with the following lemma, which informally tells us that attaching $\mathbb{E}_2$-cells of high enough slope cannot give rise to homological stability that was not already there (compare for example \cite[Proposition 6.1]{KupersMillerPatzt} and \cite[Theorem 2.2]{MillerPatztPetersenRandalWilliams}).

\begin{lemma}\label{lemma}
Let $\Bbbk$ be a commutative ring and $\mathbf{A}\in \operatorname{Alg}_{\mathbb{E}_2}(\operatorname{sMod}_\Bbbk^{\N})$ with $H_{\ast,0}(\overline{\mathbf{A}})\cong \Bbbk[\sigma]$ for $\sigma\in H_{1,0}(\mathbf{A})$. Suppose $\mathbf{B}\in \operatorname{Alg}_{\mathbb{E}_2}(\operatorname{sMod}_\Bbbk^{\N})$ is another $\mathbb{E}_2$-algebra equipped with a morphism $f\colon \mathbf{A}\rightarrow \mathbf{B}$ that is an isomorphism on $0'$th homology and an epimorphism on $1'$st homology. Assume moreover that
\begin{align*}
H_{n,d}^{\mathbb{E}_2}(\mathbf{B},\mathbf{A})=0\qquad \text{for all }d<n.
\end{align*}
If $H_{n,d}(\overline{\mathbf{B}}/\sigma)=0$ for $d<n$, then $H_{n,d}(\overline{\mathbf{A}}/\sigma)=0$ for $d<n$
\end{lemma}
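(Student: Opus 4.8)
The plan is to compare $\overline{\mathbf{A}}/\sigma$ with $\overline{\mathbf{B}}/\sigma$ via the long exact sequence of the pair, and to control the relative term using the hypothesis on relative $\mathbb{E}_2$-homology. Writing $H_{n,d}(\overline{\mathbf{B}}/\sigma,\overline{\mathbf{A}}/\sigma)$ for the relative homology, the long exact sequence
\[
H_{n,d+1}(\overline{\mathbf{B}}/\sigma)\to H_{n,d+1}(\overline{\mathbf{B}}/\sigma,\overline{\mathbf{A}}/\sigma)\to H_{n,d}(\overline{\mathbf{A}}/\sigma)\to H_{n,d}(\overline{\mathbf{B}}/\sigma)
\]
together with the hypothesis $H_{n,d}(\overline{\mathbf{B}}/\sigma)=0$ for $d<n$ shows that $H_{n,d}(\overline{\mathbf{A}}/\sigma)$ is the cokernel of $H_{n,d+1}(\overline{\mathbf{B}}/\sigma)\to H_{n,d+1}(\overline{\mathbf{B}}/\sigma,\overline{\mathbf{A}}/\sigma)$. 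As $d$ ranges over $d<n$ the index $d+1$ ranges over $d+1\le n$, so the lemma reduces to showing that $H_{n,d}(\overline{\mathbf{B}}/\sigma)\to H_{n,d}(\overline{\mathbf{B}}/\sigma,\overline{\mathbf{A}}/\sigma)$ is surjective for all $d\le n$. Below the diagonal this is just vanishing of the relative groups, while on the diagonal $d=n$ it is a genuine surjectivity statement, which I expect to be the subtle point.

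I would first reorganise the relative term. Since $\sigma\in H_{1,0}(\mathbf{A})$ acts compatibly on $\overline{\mathbf{A}}$ and $\overline{\mathbf{B}}$, forming the quotient by $\sigma$ (the cofibre of multiplication by $\sigma$) commutes with passing to the cofibre of $f$, so the relative homology is the homology of the cofibre of $\sigma$ acting on $M_{*,*}:=H_{*,*}(\overline{\mathbf{B}},\overline{\mathbf{A}})$, and fits into a long exact sequence with $\sigma\colon M\to M$. Thus the relative groups vanish below the diagonal exactly when $\sigma$ acts invertibly on $M$ below the diagonal, and I am reduced to a slope $1$ homological stability statement for the relative homology $M$ itself.

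To analyse $M$ I would use the relative $\mathbb{E}_2$-cell structure: the vanishing $H^{\mathbb{E}_2}_{n,d}(\mathbf{B},\mathbf{A})=0$ for $d<n$ lets one realise $\mathbf{B}$ as a relative CW $\mathbb{E}_2$-algebra over $\mathbf{A}$ with cells only in bidegrees $(n,d)$ with $d\ge n$, while the hypotheses that $f$ is an isomorphism on $H_{*,0}$ and an epimorphism on $H_{*,1}$ force the relative homology, and hence the cells, into homological degree $\ge 2$ and rank $\ge 1$. Filtering by skeleta and quotienting by $\sigma$ produces a spectral sequence converging to $H_{*,*}(\overline{\mathbf{B}}/\sigma,\overline{\mathbf{A}}/\sigma)$ whose $E^1$-page is built from $H_{*,*}(\overline{\mathbf{A}}/\sigma)$ and the homology of the free $\mathbb{E}_2$-algebra on the cells. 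The key point is that the cells lie on or above the diagonal, and all their products, brackets and higher $\mathbb{E}_2$-operations again lie on or above the diagonal; hence every below-diagonal class on the $E^1$-page pairs a cell of rank $\ge 1$ with a strictly below-diagonal class of $\overline{\mathbf{A}}/\sigma$ of strictly smaller rank.

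This observation breaks the apparent circularity, which I regard as the main obstacle: the below-diagonal homology of $\overline{\mathbf{A}}/\sigma$ is precisely what we wish to kill, yet it is what feeds into $M$. I would therefore run the argument as an induction on the rank $n$, the base case being immediate from $H_{*,0}(\overline{\mathbf{A}}/\sigma)\cong\Bbbk$ and the degree bound on the cells. Granting $H_{m,d}(\overline{\mathbf{A}}/\sigma)=0$ for $d<m$ and all $m<n$, the $E^1$-page vanishes below the diagonal in rank $n$ (its only potential below-diagonal contributions involve $\overline{\mathbf{A}}/\sigma$ in ranks $<n$), so $H_{n,d}(\overline{\mathbf{B}}/\sigma,\overline{\mathbf{A}}/\sigma)=0$ for $d<n$; the long exact sequence then gives $H_{n,d}(\overline{\mathbf{A}}/\sigma)=0$ for $d<n-1$. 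The remaining case $d=n-1$ is exactly the diagonal surjectivity: here the relative group $H_{n,n}(\overline{\mathbf{B}}/\sigma,\overline{\mathbf{A}}/\sigma)$ is generated by classes that visibly lift to $H_{n,n}(\overline{\mathbf{B}}/\sigma)$ — the cell classes, and their products with diagonal classes of $\overline{\mathbf{A}}/\sigma$, all coming from $\mathbf{B}$ — so the connecting map to $H_{n,n-1}(\overline{\mathbf{A}}/\sigma)$ vanishes and the induction closes.
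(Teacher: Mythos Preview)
Your overall strategy — cofibre sequence, cell structure, spectral sequence, induction — matches the paper's, but two choices diverge in ways that matter.

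First, the paper does not filter by $\mathbb{E}_2$-skeleta. It invokes \cite[Theorem~15.9]{GalatiusKupersRandalWilliams} to transfer the vanishing $H^{\mathbb{E}_2}_{n,d}(\mathbf{B},\mathbf{A})=0$ for $d<n$ to $H^{\overline{\mathbf{A}}}_{n,d}(\overline{\mathbf{B}},\overline{\mathbf{A}})=0$ for $d<n$, and then takes an $\overline{\mathbf{A}}$-\emph{module} CW-approximation $\mathbf{M}\xrightarrow{\sim}\overline{\mathbf{B}}$. This gives the clean associated graded $\operatorname{gr}(\mathbf{M}/\overline{\mathbf{A}})\simeq\bigoplus_\alpha S^{n_\alpha,d_\alpha}_\Bbbk\otimes\overline{\mathbf{A}}$, so the $E^1$-page is literally a shifted copy of $H_{*,*}(\overline{\mathbf{A}}/\sigma)$. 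Your description of the $E^1$-page as ``built from $H_{*,*}(\overline{\mathbf{A}}/\sigma)$ and the homology of the free $\mathbb{E}_2$-algebra on the cells'' is not what the $\mathbb{E}_2$-skeletal filtration produces, and it is not clear how to make that precise.

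Second, and more importantly, the paper inducts on the homological degree $d$, not on the rank $n$. The point of the hypotheses ``iso on $H_{*,0}$, epi on $H_{*,1}$'' is precisely (via \cite[Corollary~11.14]{GalatiusKupersRandalWilliams}) to force $d_\alpha\geq 2$ for every cell. Then, to prove $H_{n,D}(\overline{\mathbf{A}}/\sigma)=0$, one needs $H_{n,D+1}((\mathbf{M}/\overline{\mathbf{A}})/\sigma)=0$; the relevant $E^1$-terms are $H_{n-n_\alpha,\,(D+1)-d_\alpha}(\overline{\mathbf{A}}/\sigma)$, whose homological degree is at most $D-1$ and hence lies strictly inside the induction hypothesis. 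That two-step drop is what makes the induction close without any separate diagonal argument. Your induction on $n$ only buys a one-step drop in rank (since $n_\alpha\geq 1$), so the case $d=n-1$ forces you to control $H_{n,n}(\overline{\mathbf{B}}/\sigma,\overline{\mathbf{A}}/\sigma)$, whose $E^1$-contributions are genuine diagonal classes $H_{n-n_\alpha,n-n_\alpha}(\overline{\mathbf{A}}/\sigma)$ for cells with $n_\alpha=d_\alpha$.

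Your resolution of that case is the gap. The assertion that these diagonal classes ``visibly lift'' to $H_{n,n}(\overline{\mathbf{B}}/\sigma)$ is not justified: a class $[\text{cell}]\otimes a$ in the associated graded need not come from $\overline{\mathbf{B}}/\sigma$, because the cell is glued along a nontrivial attaching map, and the connecting homomorphism $H_{n,n}((\mathbf{M}/\overline{\mathbf{A}})/\sigma)\to H_{n,n-1}(\overline{\mathbf{A}}/\sigma)$ is exactly governed by those attaching maps. Nothing in your argument rules out a nonzero boundary. Switching to the paper's induction on $d$ and using $d_\alpha\geq 2$ sidesteps this entirely.
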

\begin{proof}
First of all, we note that by \cite[Theorem 15.9]{GalatiusKupersRandalWilliams}, we have
\begin{align*}
H_{n,d}^{\overline{\mathbf{A}}}(\overline{\mathbf{B}},\overline{\mathbf{A}})=0\qquad \text{for all }d<n.
\end{align*}
Moreover, \cite[Corollary 11.14]{GalatiusKupersRandalWilliams} applies to show that we additionally have
\begin{align*}
H_{n,d}^{\overline{\mathbf{A}}}(\overline{\mathbf{B}},\overline{\mathbf{A}})=0\qquad \text{whenever }d\leq 1.
\end{align*}

It follows that there is a CW-approximation of $\overline{\mathbf{A}}$-modules
\begin{center}
\begin{tikzpicture}
\matrix (m) [matrix of math nodes,row sep=2em,column sep=2em]
  {
\overline{\mathbf{A}} & \overline{\mathbf{B}} \\
\mathbf{M} & \\
  };
  \path[-stealth]
(m-1-1) edge node[above]{$f$} (m-1-2)
(m-1-1) edge (m-2-1)
(m-2-1) edge node[below right]{$\scriptstyle\sim$} (m-1-2)
;
\end{tikzpicture}
\end{center}
that only has relative $(n,d)$-cells for $d\geq n$ and $d\geq 2$ (\cite[Theorem 11.21]{GalatiusKupersRandalWilliams}). The skeletal filtration of $\mathbf{M}$ induces a filtration of the cofibre $\mathbf{M}/\overline{\mathbf{A}}$ with associated graded
\begin{align*}
\operatorname{gr}(\mathbf{M}/\overline{\mathbf{A}})\simeq \bigoplus_{\alpha}S_\Bbbk^{n_\alpha,d_\alpha} \otimes \overline{\mathbf{A}}\qquad \text{with }d_\alpha\geq n_\alpha \text{ and } d_\alpha\geq 2.
\end{align*}

Since the functor $(-)/\sigma = (-) \otimes_{\overline{\mathbf{A}}} \overline{\mathbf{A}}/\sigma$ preserves homotopy cofibre sequences of $\overline{\mathbf{A}}$-modules (\cite[Definition 12.14]{GalatiusKupersRandalWilliams}), the filtration of $\mathbf{M}/\overline{\mathbf{A}}$ passes to a filtration of $(\mathbf{M}/\overline{\mathbf{A}})/\sigma$ whose associated spectral sequence has $E^1$-page:
\begin{align}\label{ss}
E^1_{n,p,q}=\bigoplus_{\substack{\alpha \\ d_\alpha=p}}H_{n,p+q}(S_\Bbbk^{n_\alpha,d_\alpha} \otimes \overline{\mathbf{A}}/\sigma) \ \Rightarrow\  H_{n,p+q}((\mathbf{M}/\overline{\mathbf{A}})/\sigma).
\end{align}
Note here that
\begin{align*}
H_{n,d}(S_\Bbbk^{n_\alpha,d_\alpha} \otimes \overline{\mathbf{A}}/\sigma)\cong H_{n-n_\alpha,d-d_\alpha}(\overline{\mathbf{A}}/\sigma)
\end{align*}
by the Künneth spectral sequence.

Assume now that $H_{n,d}(\overline{\mathbf{B}}/\sigma)=0$ for all $d<n$. We'll prove by induction on $d$ that
$H_{n,d}(\overline{\mathbf{A}}/\sigma)=0$ for all $d<n$. We observe first that
\begin{align*}
H_{n,0}(\overline{\mathbf{A}}/\sigma)=0 \qquad \text{for }n>0
\end{align*}
as $H_{n,0}(\overline{\mathbf{A}})\cong \Bbbk[\sigma]$ by assumption.

Now let $D<n$ and assume that the claim holds for all $d<D$. For any $d\leq D+1$ and any $\alpha$ from the indexing set of the skeletal filtration of $\mathbf{M}$, we have $d-d_\alpha<D$ (as $d_\alpha\geq 2$). Thus by the induction hypothesis, the spectral sequence (\ref{ss}) has $E^1_{n,p,q}=0$ whenever $p+q\leq D+1$ and we conclude that
\begin{align*}
H_{n,d}((\mathbf{M}/\overline{\mathbf{A}})/\sigma)\qquad \text{for any }d\leq D+1.
\end{align*}

Consider the cofibre sequence
\begin{align*}
\overline{\mathbf{A}}/\sigma\rightarrow \mathbf{M}/\sigma\rightarrow (\mathbf{M}/\overline{\mathbf{A}})/\sigma
\end{align*}
and the following small section of the associated long exact sequence in homology
\begin{align*}
H_{n,D+1}((\mathbf{M}/\overline{\mathbf{A}})/\sigma) \rightarrow H_{n,D}(\overline{\mathbf{A}}/\sigma)\rightarrow H_{n,D}(\mathbf{M}/\sigma).
\end{align*}
Combining the above with the assumption on the homology of $\mathbf{M}/\sigma\simeq \overline{\mathbf{B}}/\sigma$, we conclude that
\begin{align*}
H_{n,D}(\overline{\mathbf{A}}/\sigma)=0
\end{align*}
as desired. That finishes the proof.
\end{proof}

Conversely, if one starts with an $\mathbb{E}_2$-algebra with a given slope of stability, attaching cells of high enough slope preserves the given stability. This is a general strategy in this business and will also be the proof strategy of our main result. Since \Cref{prop} does not apply to free algebras, we will need to start our stability argument with a slightly more exotic $\mathbb{E}_2$-algebra.


\begin{definition}
Consider the free $\mathbb{E}_2$-algebra on a single generator $\sigma$ in bidegree $(1,0)$:
\begin{align*}
\mathbf{E}_2(S^{1,0}_\Bbbk\sigma)
\end{align*}
and the element
\begin{align*}
Q^1\sigma \in H_{2,1}(\mathbf{E}_2(S^{1,0}_\Bbbk\sigma))\cong \Z
\end{align*}
(see \cite[\S 16]{GalatiusKupersRandalWilliams}). Choose a map $Q^1\sigma\colon S^{2,1}_\Bbbk \rightarrow \mathbf{E}_2(S^{1,0}_\Bbbk\sigma)$ representing this class and construct the $\mathbb{E}_2$-algebra
\begin{align*}
\mathbf{X}:= \mathbf{E}_2(S^{1,0}_\Bbbk\sigma) \cup^{\mathbb{E}_2}_{Q^1\sigma} D^{2,2}_\Bbbk\rho
\end{align*}
by attaching a cell along $Q^1\sigma$.
\end{definition}

\begin{proposition} \label{Xstability}
$H_{n,d}(\overline{\mathbf{X}}/\sigma)=0$ for $d<n$.
\end{proposition}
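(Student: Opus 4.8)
The strategy is to compute $H_{*,*}(\overline{\mathbf{X}}/\sigma)$ directly from the two–cell structure of $\mathbf{X}$, running the same kind of skeletal spectral sequence that appears in the proof of \Cref{lemma}. It is convenient to first recall what the free algebra contributes. Since $\sigma$ sits in homological degree $0$, the underlying object of $\mathbf{E}_2(S^{1,0}_\Bbbk\sigma)$ in rank $n$ is the $\Bbbk$–homology of the unordered configuration space of $n$ points in $\R^2$, so that $H_{n,d}(\mathbf{E}_2(S^{1,0}_\Bbbk\sigma))\cong H_d(\beta_n;\Bbbk)$, where $\beta_n$ is the braid group on $n$ strands and $\sigma$ acts as braid stabilisation. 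Braid groups satisfy only slope $\tfrac12$ stability, and the first obstruction to slope $1$ stability is exactly the generator of $H_{2,1}\cong\Z$, namely $Q^1\sigma$ (a class in the same bidegree as the Browder bracket $[\sigma,\sigma]$). This is why the free algebra fails the conclusion, and why killing this one class is the natural remedy. A convenient reformulation, via the long exact sequence of the cofibre sequence $\Sigma^{1,0}\overline{\mathbf{X}}\xrightarrow{\cdot\sigma}\overline{\mathbf{X}}\to\overline{\mathbf{X}}/\sigma$, is that the statement is equivalent to slope $1$ stability for $\overline{\mathbf{X}}$ itself, i.e. to $\cdot\sigma$ being an isomorphism on $H_{n-1,d}\to H_{n,d}$ for $d<n-1$ and a surjection for $d=n-1$.

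Next I would analyse $\overline{(-)}/\sigma$ applied to the $\mathbb{E}_2$–pushout $\mathbf{X}=\mathbf{E}_2(S^{1,0}_\Bbbk\sigma)\cup^{\mathbb{E}_2}_{Q^1\sigma}D^{2,2}_\Bbbk\rho$. The cell filtration yields a spectral sequence converging to $H_{*,*}(\overline{\mathbf{X}}/\sigma)$ whose $E^1$–term is assembled from the free $\mathbb{E}_2$–algebra on the two cells $\sigma$ and $\rho$, reduced modulo $\sigma$, with differential governed by the attaching map via $d\rho=Q^1\sigma$. Here the bidegree bookkeeping is favourable: the new classes coming from $\rho$ are the $\mathbb{E}_2$–words in $\rho$ that are not divisible by $\sigma$ — the powers $\rho^k$ in bidegree $(2k,2k)$, brackets such as $[\sigma,\rho]$ in bidegree $(3,3)$, and the remaining higher operations — all of which lie on or above the line $d=n$, while every word that is a $\sigma$–multiple (such as $\sigma^j\rho$ in bidegree $(2+j,2)$) is annihilated after passing to $\overline{(-)}/\sigma$. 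Thus the only classes that could lie strictly below the line $d=n$ are the below–the–line classes already present in $\overline{\mathbf{E}_2(S^{1,0}_\Bbbk\sigma)}/\sigma$, and these must be cancelled by the differential emanating from $\rho$.

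The main obstacle is precisely this cancellation, carried out integrally. Concretely, one must show that the part of $H_{*,*}(\overline{\mathbf{E}_2(S^{1,0}_\Bbbk\sigma)}/\sigma)$ in the region $d<n$ is generated, under products and $\mathbb{E}_2$–operations, by the single class $Q^1\sigma$; granting this, the Leibniz rule propagates the differential $d\rho=Q^1\sigma$ — for instance $d(\rho\cdot x)=Q^1\sigma\cdot x\pm\rho\cdot dx$, and compatibly on the operations applied to $\rho$ — so that the whole below–the–line part is hit and the spectral sequence leaves nothing in degrees $d<n$. Establishing this generation statement over $\Bbbk=\Z$ is the delicate point, since one has to keep track of the $2$–torsion and Bockstein phenomena in the homology of configuration spaces that the hypothesis that $2$ be invertible in \cite[Proposition 5.1]{KupersMillerPatzt} is designed to sidestep; this is exactly where it matters that $Q^1\sigma$ was chosen to generate all of $H_{2,1}\cong\Z$, so that attaching a single cell suffices. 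With the generation established, the spectral sequence collapses onto classes with $d\geq n$, and the proposition follows.
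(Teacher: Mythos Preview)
Your outline correctly identifies the right spectral sequence and the right obstruction, but the proof is not there: the sentence ``one must show that the part of $H_{*,*}(\overline{\mathbf{E}_2(S^{1,0}_\Bbbk\sigma)}/\sigma)$ in the region $d<n$ is generated, under products and $\mathbb{E}_2$--operations, by the single class $Q^1\sigma$'' is the entire content of the proposition, and you only assert it. Checking this amounts to a complete integral calculation of the unstable homology of braid groups together with its Dyer--Lashof structure (Cohen's computation in \cite{CohenLadaMay}) and a verification that every below--the--line class is an $\mathbb{E}_2$--word in $Q^1\sigma$; for instance one must see that $Q^1Q^1\sigma\in H_{4,3}$ is hit, which requires knowing that the differential commutes with $Q^1$ so that $d^1(Q^1\rho)=Q^1(Q^1\sigma)$, and similarly for all higher iterates and products. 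None of this is carried out, and the remark that ``$Q^1\sigma$ was chosen to generate all of $H_{2,1}\cong\Z$'' only handles the first obstruction, not the infinitely many later ones. There is also a smaller imprecision: classes such as $[\sigma,\rho]$ involve $\sigma$ but are not $\sigma$--multiples in the module sense, so your claim that ``every word that is a $\sigma$--multiple \dots\ is annihilated after passing to $\overline{(-)}/\sigma$'' needs a more careful formulation before you can conclude that the $\rho$--sector lies on or above the diagonal.

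The paper avoids all of this by a geometric identification: $\mathbf{X}$ is recognised as the bounded symmetric product $\operatorname{Sym}^{\leq 2}(\R^2)$ of \cite[Definition~6.1]{KupersMiller}, via \cite[Proposition~6.11]{KupersMiller} together with the comparison of cell--attachment conventions in \cite[Remark~2.6]{KlangKupersMiller}. Slope~$1$ stability for $\operatorname{Sym}^{\leq 2}(\R^2)$ is then a known computation recorded on p.~44 of \cite{KupersMiller}, and the proposition follows immediately. This route packages exactly the braid--group calculation you would need into a single citation, so if you want to pursue your direct approach you should either carry out the Cohen--style computation in full or, more efficiently, recognise that the answer is already in the literature in the guise of $\operatorname{Sym}^{\leq 2}(\R^2)$.
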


\begin{proof}
Observe that the $\mathbb{E}_2$-algebra $\mathbf{X}$ is homotopy equivalent to the space $\operatorname{Sym}^{\leq 2}(\R^2)$ of \cite[Definition 6.1]{KupersMiller}: this follows directly from \cite[Proposition 6.11]{KupersMiller} and the observation that the $\mathcal{O}$-cells in \cite{KupersMiller}, which are defined using partial algebras, coincide with the $\mathcal{O}$-cells of \cite{GalatiusKupersRandalWilliams} (see \cite[Remark 2.6]{KlangKupersMiller}).

It is observed on p.44 of \cite{KupersMiller} that $\operatorname{Sym}^{\leq 2}(\R^2)$ satisfies homological stability of slope $1$: more precisely, the stabilisation map $\operatorname{Sym}_n^{\leq 2}(\R^2)\rightarrow \operatorname{Sym}_{n+1}^{\leq 2}(\R^2)$ induces an isomorphism
\begin{align*}
H_d(\operatorname{Sym}_n^{\leq 2}(\R^2))\xrightarrow{\ \cong \ } H_d(\operatorname{Sym}_{n+1}^{\leq 2}(\R^2)), \quad \text{for }d\leq n.
\end{align*}
In particular, $H_{n,d}(\overline{\mathbf{X}}/\sigma)=0$ for $d<n$.
\end{proof}

\begin{lemma}\label{lemma2}
Consider the $\mathbb{E}_2$-algebra
\begin{align*}
\mathbf{Y}:= \mathbf{E}_2(S^{1,0}_\Bbbk \sigma \oplus S^{1,1}_\Bbbk \alpha) \cup^{\mathbb{E}_2}_{Q^1\sigma-\sigma \alpha} D^{2,2}_\Bbbk \rho
\end{align*}
with trivial cells $\sigma$ and $\alpha$ in bidegrees $(1,0)$, respectively $(1,1)$, and a cell $\rho$ in bidegree $(2,2)$ attached along $Q^1\sigma-\sigma \alpha$. This satisfies 
\begin{align*}
H_{n,d}(\overline{\mathbf{Y}}/\sigma)=0\qquad \text{for all }d<n.
\end{align*}
\end{lemma}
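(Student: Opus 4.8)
The plan is to deduce the stability of $\mathbf{Y}$ from that of $\mathbf{X}$ (Proposition~\ref{Xstability}) via an application of Lemma~\ref{lemma}. First I would construct a comparison morphism $f\colon \mathbf{Y}\to \mathbf{X}$ of $\mathbb{E}_2$-algebras determined by $\sigma\mapsto \sigma$, $\alpha\mapsto 0$ and $\rho\mapsto\rho$. This is well-defined precisely because $f$ carries the attaching class $Q^1\sigma-\sigma\alpha$ of $\mathbf{Y}$ to $Q^1\sigma$, which is the class along which $\rho$ is attached in $\mathbf{X}$. Taking $\mathbf{A}=\mathbf{Y}$ and $\mathbf{B}=\mathbf{X}$, it then suffices to verify the three hypotheses of Lemma~\ref{lemma}: that $f$ is an isomorphism on $H_{\ast,0}$, an epimorphism on $H_{\ast,1}$, and that $H^{\mathbb{E}_2}_{n,d}(\mathbf{X},\mathbf{Y})=0$ for all $d<n$.

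The first two are low-degree checks. Since $\alpha$ and $\rho$ lie in homological degree $\geq 1$, neither affects $H_{\ast,0}$, so $H_{\ast,0}(\mathbf{X})\cong \Bbbk[\sigma]\cong H_{\ast,0}(\mathbf{Y})$, and as $f$ is the identity on $\sigma$ it is an isomorphism there. For $H_{\ast,1}$, I would note that the relative homology $H_{\ast,\ast}(\mathbf{X},\mathbf{E}_2(S^{1,0}_\Bbbk\sigma))$ is concentrated in homological degrees $\geq 2$, since $\mathbf{X}$ is obtained from $\mathbf{E}_2(S^{1,0}_\Bbbk\sigma)$ by attaching the single cell $\rho$ in bidegree $(2,2)$; hence $\mathbf{E}_2(S^{1,0}_\Bbbk\sigma)\to \mathbf{X}$ is surjective on $H_{\ast,1}$. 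As this map factors through $f$, the latter is an epimorphism on $H_{\ast,1}$.

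The heart of the argument is the relative $\mathbb{E}_2$-homology computation. Both $\mathbf{X}$ and $\mathbf{Y}$ are CW $\mathbb{E}_2$-algebras whose attaching maps, $Q^1\sigma$ and $Q^1\sigma-\sigma\alpha$, are decomposable: they are built from products and Dyer--Lashof operations, which are invisible to $\mathbb{E}_2$-homology. Consequently the cellular differential computing $\mathbb{E}_2$-homology vanishes, so $H^{\mathbb{E}_2}_{\ast,\ast}(\mathbf{X})$ is free on classes in bidegrees $(1,0)$ and $(2,2)$, while $H^{\mathbb{E}_2}_{\ast,\ast}(\mathbf{Y})$ is free on classes in bidegrees $(1,0)$, $(1,1)$ and $(2,2)$. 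The induced map $f_\ast$ is an isomorphism in bidegrees $(1,0)$ and $(2,2)$ and kills the class in bidegree $(1,1)$, so the long exact sequence of the pair shows that $H^{\mathbb{E}_2}_{n,d}(\mathbf{X},\mathbf{Y})$ equals $\Bbbk$ in bidegree $(1,2)$ and vanishes otherwise. Since $(1,2)$ satisfies $d=2\geq 1=n$, this gives $H^{\mathbb{E}_2}_{n,d}(\mathbf{X},\mathbf{Y})=0$ for all $d<n$, and Lemma~\ref{lemma} then yields the desired vanishing for $\mathbf{Y}$.

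I expect the main obstacle to be the justification that the $\mathbb{E}_2$-homology is free on the cells, i.e.\ that both attaching classes are genuinely decomposable and hence do not contribute to $\mathbb{E}_2$-homology. This is exactly the point that forces the relative homology into the permissible region $d\geq n$ rather than the obstructing region $d<n$; once it is secured, the epimorphism on $H_{\ast,1}$ and the conclusion are formal consequences of the long exact sequence and Lemma~\ref{lemma}.
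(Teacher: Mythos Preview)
Your proposal is correct and follows essentially the same approach as the paper: both apply Lemma~\ref{lemma} with $\mathbf{A}=\mathbf{Y}$ and $\mathbf{B}\simeq\mathbf{X}$, deducing the stability of $\mathbf{Y}$ from Proposition~\ref{Xstability}. The only cosmetic difference is that the paper constructs the comparison map via an intermediate $\mathbf{X}'$ obtained from $\mathbf{Y}$ by attaching a $(1,2)$-cell along $\alpha$ and then identifying $\mathbf{X}'\simeq \mathbf{X}$ by a cell-cancellation argument, which makes the relative $\mathbb{E}_2$-homology hypothesis immediate from the single relative cell in bidegree $(1,2)$; you instead write down the map $\mathbf{Y}\to\mathbf{X}$ directly and reach the same conclusion by computing the absolute $\mathbb{E}_2$-homology of $\mathbf{X}$ and $\mathbf{Y}$ and invoking the long exact sequence.
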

\begin{proof}
Let $\mathbf{X}'$ be the $\mathbb{E}_2$-algebra obtained from $\mathbf{Y}$ by attaching a cell in bidegree $(1,2)$ glued along the generator $\alpha$. Then the map of $\mathbb{E}_2$-algebras
\begin{align*}
\mathbf{E}_2(S^{1,0}_\Bbbk\sigma)\rightarrow \mathbf{X}'
\end{align*}
extends to a map $\mathbf{X}\rightarrow \mathbf{X}'$. In fact, this map is a homotopy equivalence since the cells cancel out (see the proof of \cite[Proposition 6.3]{KupersMillerPatzt} for a similar strategy).

Thus we have maps of $\mathbb{E}_2$-algebras
\begin{align*}
\mathbf{Y}\longrightarrow \mathbf{X}'\xleftarrow{\ \simeq \ }\mathbf{X}.
\end{align*}
By \Cref{Xstability}, we have
\begin{align*}
H_{n,d}(\mathbf{X}'/\sigma)=0\qquad \text{for }d<n,
\end{align*}
and \Cref{lemma} implies that this also holds for $\mathbf{Y}$ as we've attached a cell in bidegree $(1,2)$.
\end{proof}

We now turn to the proof of our main result.

\begin{proof}[Proof of \Cref{prop}]
We will first of all show that the stabilisation map
\begin{align*}
\sigma\cdot \colon H_{1,1}(\mathbf{A})\rightarrow H_{2,1}(\mathbf{A})
\end{align*}
is surjective (this can be viewed as a converse to the claim on p. 201 of \cite{GalatiusKupersRandalWilliams} and in fact we'll use much the same techniques to prove it). Consider the free $\mathbb{E}_1$-algebra $\mathbf{E}_1(\mathbf{A}(1))$ and the map $\mathbf{E}_1(\mathbf{A}(1))\rightarrow \mathbf{A}$. From the long exact sequence in $\mathbb{E}_1$-homology,
\begin{align*}
\cdots\rightarrow H_{n,d}^{\mathbb{E}_1}(\mathbf{E}_1(\mathbf{A}(1)))\rightarrow H_{n,d}^{\mathbb{E}_1}(\mathbf{A})\rightarrow H_{n,d}^{\mathbb{E}_1}(\mathbf{A},\mathbf{E}_1(\mathbf{A}(1)))\rightarrow H_{n,d-1}^{\mathbb{E}_1}(\mathbf{E}_1(\mathbf{A}(1)))\rightarrow \cdots,
\end{align*}
we deduce that the groups
\begin{align*}
H_{n,d}^{\mathbb{E}_1}(\mathbf{A},\mathbf{E}_1(\mathbf{A}(1)))
\end{align*}
vanish for $(n,d)\in \{(1,0),(1,1),(2,0),(2,1)\}$ (here we use that $H_{2,\ast}^{\mathbb{E}_1}(\mathbf{E}_1(\mathbf{A}(1)))$ always vanishes, that $H_{2,0}^{\mathbb{E}_1}(\mathbf{A})=0$ by assumption and that the map $\mathbf{E}_1(\mathbf{A}(1))\rightarrow \mathbf{A}$ induces an isomorphism on $H_{1,0}^{\mathbb{E}_1}(-)$ and a surjection on $H_{1,1}^{\mathbb{E}_1}(-)$). It follows from \cite[Corollary 11.14]{GalatiusKupersRandalWilliams} that the same holds for the relative homology groups $H_{n,d}(\mathbf{A},\mathbf{E}_1(\mathbf{A}(1)))$. In particular
\begin{align*}
H_{2,1}(\mathbf{A},\mathbf{E}_1(\mathbf{A}(1)))=0.
\end{align*}
Plugging this into the long exact sequence in homology, we see that the map
\begin{align*}
H_{2,1}(\mathbf{E}_1(\mathbf{A}(1)))\longrightarrow H_{2,1}(\mathbf{A})
\end{align*}
is a surjection. Recall that $\mathbf{E}_1(\mathbf{A}(1))$ is equivalent to the free associative algebra $ \bigoplus_{i\geq 0} \mathbf{A}(1)^{\otimes i}$; hence, from the Künneth isomorphism
\begin{align*}
H_{2,1}(\mathbf{E}_1(\mathbf{A}(1)))\cong H_0(\mathbf{A}(1))\otimes H_1(\mathbf{A}(1))\oplus H_1(\mathbf{A}(1))\otimes H_0(\mathbf{A}(1))
\end{align*}
we deduce that the stabilisation map $H_1(\mathbf{A}(1))\rightarrow H_1(\mathbf{A}(2))$ is a surjection as desired.

Let $\sigma\colon S^{1,0}_\Bbbk\rightarrow \mathbf{A}$ denote a representative of the generator $\sigma$ in $H_{1,0}(\mathbf{A})$. This extends to a map of $\mathbb{E}_2$-algebras
\begin{align*}
f\colon \mathbf{E}_2(S^{1,0}_\Bbbk\sigma)\rightarrow \mathbf{A}.
\end{align*}

Choose a map $Q^1\sigma\colon S^{2,1}_\Bbbk \rightarrow \mathbf{E}_2(S^{1,0}_\Bbbk\sigma)$ representing the class $Q^1\sigma \in H_{2,1}(\mathbf{E}_2(S^{1,0}_\Bbbk\sigma))$. Since the map $\sigma\cdot \colon H_{1,1}(\mathbf{A})\rightarrow H_{2,1}(\mathbf{A})$ is surjective, there is an $\alpha\in H_{1,1}(\mathbf{A})$ such that $Q^1\sigma=\sigma\alpha$ in $H_{2,1}(\mathbf{A})$. A choice of null-homotopy of the map $Q^1\sigma-\sigma\alpha\colon S^{2,1}_\Bbbk \rightarrow \mathbf{A}$ gives rise to a map of $\mathbb{E}_2$-algebras
\begin{align*}
g\colon \mathbf{Y}=\mathbf{E}_2(S^{1,0}_\Bbbk \sigma\oplus S^{1,1}_\Bbbk \alpha)\cup^{\mathbb{E}_2}_{Q^1\sigma-\sigma\alpha}D^{2,2}_\Bbbk \rho \rightarrow \mathbf{A}
\end{align*}
where we readily identify the $\mathbb{E}_2$-algebra $\mathbf{Y}$ constructed in the previous lemma.

Now recall that we can transfer the vanishing line in $\mathbb{E}_1$-homology to $\mathbb{E}_2$-homology (\cite[Theorem 14.4]{GalatiusKupersRandalWilliams}); in other words,
\begin{align*}
H_{n,d}^{\mathbb{E}_2}(\mathbf{A})=0\quad \text{for }d<n.
\end{align*}
Consider the associated long exact sequence in $\mathbb{E}_2$-homology
\begin{align*}
\cdots\rightarrow H_{n,d}^{\mathbb{E}_2}(\mathbf{Y})\rightarrow H_{n,d}^{\mathbb{E}_2}(\mathbf{A})\rightarrow H_{n,d}^{\mathbb{E}_2}(\mathbf{A},\mathbf{Y})\rightarrow H_{n,d-1}^{\mathbb{E}_2}(\mathbf{Y})\rightarrow \cdots
\end{align*}

Combining the vanishing line $H_{n,d}^{\mathbb{E}_2}(\mathbf{A})=0$, $d<n$, with the fact that $H_{n,d}^{\mathbb{E}_2}(\mathbf{Y})$ is non-trivial only when $(n,d)$ is $(1,0)$, $(1,1)$ or $(2,2)$, we see that $H_{n,d}^{\mathbb{E}_2}(\mathbf{A},\mathbf{Y})=0$ for $d<n$. It follows that there is a CW-approximation
\begin{center}
\begin{tikzpicture}
\matrix (m) [matrix of math nodes,row sep=2em,column sep=2em]
  {
\mathbf{Y} & \mathbf{A} \\
\mathbf{C} & \\
  };
  \path[-stealth]
(m-1-1) edge node[above]{$g$} (m-1-2)
(m-1-1) edge (m-2-1)
(m-2-1) edge node[below right]{$\scriptstyle\sim$} (m-1-2)
;
\end{tikzpicture}
\end{center}
where $\mathbf{C}$ is obtained from $\mathbf{Y}$ by attaching $\mathbb{E}_2$-cells in bidegrees $(n,d)$ satisfying $d\geq n$.

Attaching $\mathbb{E}_2$-cells of slope $\sfrac{d}{n}\geq 1$ preserves this slope of stability: indeed by \cite[Corollary 15.10]{GalatiusKupersRandalWilliams}, we have the following vanishing line for module cells
\begin{align*}
H_{n,d}^{\overline{\mathbf{Y}}}(\overline{\mathbf{A}})=0\qquad \text{for }d<n.
\end{align*}
It follows that we have a CW-approximation of $\overline{\mathbf{Y}}$-modules 
\begin{align*}
\mathbf{D}\xrightarrow{\ \sim\ } \overline{\mathbf{A}}
\end{align*}
such that $\mathbf{D}$ only has $(n,d)$-cells with $d\geq n$ (\cite[Theorem 11.21]{GalatiusKupersRandalWilliams}). Since the functor $(-)/\sigma = (-) \otimes_{\overline{\mathbf{Y}}} \overline{\mathbf{Y}}/\sigma$ preserves homotopy cofibre sequences of $\overline{\mathbf{Y}}$-modules (\cite[Definition 12.14]{GalatiusKupersRandalWilliams}), the skeletal filtration of $\mathbf{D}$ passes to a filtration of $\mathbf{D}/\sigma$ with associated graded
\begin{align*}
\operatorname{gr}(\mathbf{D}/\sigma)\simeq \operatorname{gr}(\mathbf{D})/\sigma\simeq\bigoplus_{\alpha}S_\Bbbk^{n_\alpha,d_\alpha} \otimes \overline{\mathbf{Y}}/\sigma
\end{align*}
with $n_\alpha\geq d_\alpha$. Consider the Künneth spectral sequence
\begin{align*}
E^2_{n,p,q}=\bigoplus_{\substack{n'+n''=n,\\ q'+q''=q}}\operatorname{Tor}_p^\Bbbk \bigg(H_{n',q'}\big(\bigoplus_{\alpha}S_\Bbbk^{n_\alpha,d_\alpha}\big), H_{n'',q''}(\overline{\mathbf{Y}}/\sigma)\bigg) \ \Rightarrow \ H_{n,p+q}\big(\operatorname{gr}(\mathbf{D}/\sigma)\big)
\end{align*}
(\cite[Lemma 10.5]{GalatiusKupersRandalWilliams}). Note that the group $H_{n',q'}\big(\bigoplus_{\alpha}S_\Bbbk^{n_\alpha,d_\alpha}\big)$ vanishes whenever $q'<n'$. Hence, it can only be trivial for $n'\leq q'$. Assuming $p+q<n$, we see that $n'\leq q'$ implies that $n''-q''\geq n-q\geq n-q-p>0$ and hence
$H_{n'',d''}(\overline{\mathbf{Y}}/\sigma)=0$ by \Cref{lemma2}. It follows that $E^2_{n,p,q}=0$ whenever $p+q<n$ and hence
\begin{align*}
H_{n,d}(\operatorname{gr}(\mathbf{D}/\sigma))=0\qquad \text{for }d<n.
\end{align*}
By the skeletal spectral sequence (\cite[Theorem 10.10]{GalatiusKupersRandalWilliams}), it follows that the same vanishing line holds for $\mathbf{D}/\sigma$:
\begin{align*}
H_{n,d}(\overline{\mathbf{A}}/\sigma)\cong H_{n,d}(\mathbf{D}/\sigma) =0\quad \text{for }d<n
\end{align*}
as desired.
\end{proof}

\begin{remark}
We remark that the exact same strategy can be used to prove slope $\tfrac{m}{m+1}$ stability as in \cite[Proposition 5.1]{KupersMillerPatzt}. In \Cref{prop}, assume that
\begin{align*}
H^{\mathbb{E}_2}_{n,d}(\mathbf{A})=0\quad \text{for }d<n-1\quad \text{and}\quad H^{\mathbb{E}_2}_{2,1}(\mathbf{A})=H^{\mathbb{E}_2}_{3,2}(\mathbf{A})=\cdots =H^{\mathbb{E}_2}_{m,m+1}(\mathbf{A}) =0.
\end{align*}
Then
\begin{align*}
H_{n,d}(\overline{\mathbf{A}}/\sigma)=0\qquad\text{for } d\leq \tfrac{m}{m+1}(n-1).
\end{align*}
We leave the details to the reader, so the ideas of the proof stand out more clearly.

Likewise \Cref{lemma} can be generalised to slope $\tfrac{m}{m+1}$ stability for a statement as in \cite[Proposition 6.1]{KupersMillerPatzt}: assume instead that $H^{\mathbb{E}_2}_{n,d}(\mathbf{B},\mathbf{A})=0$ for $d\geq \tfrac{m}{m+1}n+1$. Then
\begin{align*}
H_{n,d}(\overline{\mathbf{B}}/\sigma)=0\qquad\text{for } d\leq \tfrac{m}{m+1}(n-1)
\end{align*}
will imply the same vanishing line for homology of $\overline{\mathbf{A}}/\sigma$.
\end{remark}

\begin{remark} \label{RemarkFree}
In \cite{KupersMillerPatzt}, they establish \Cref{prop} under the stronger hypothesis that $k>2$ and $2$ is a unit in $\Bbbk$. Their proof strategy is similar to ours except with $\mathbf{Y}$ replaced by the free $\mathbb{E}_k$-algebra $\mathbf{E}_k(S^{1,0}_\Bbbk \sigma)$. The homology groups $H_{n,i}(\mathbf{E}_k(S^{1,0}_\Bbbk \sigma))$ are completely known by work of Cohen \cite{CohenLadaMay} from which one sees that $H_{n,i}(\mathbf{E}_k(S^{1,0}_\Bbbk \sigma))$ has slope $1$ stability (with the stated offset) if and only if $k>2$ and $2$ is a unit in $\Bbbk$. In particular,
\begin{align*}
H_{2,1}(\overline{\mathbf{E}}_k(S^{1,0}_\Z \sigma)/\sigma)=\Z\quad\text{if } k=2\quad \text{and}\quad H_{2,1}(\overline{\mathbf{E}}_k(S^{1,0}_\Z \sigma)/\sigma)=\Z /2\Z \quad\text{if }k>2.
\end{align*}
Thus, the stated hypotheses are necessary if one wants to prove slope $1$ stability for an algebra $\mathbf{A}$ by attaching cells to $\mathbf{E}_k(S^{1,0}_\Bbbk \sigma)$ and using, as we have done, that cell attachments preserve homological stability. Our primary innovation thus consists of building $\mathbf{A}$ by attaching cells to $\mathbf{Y}$ instead.
\end{remark}

\bibliographystyle{alpha}

\end{document}